\begin{document}

\newtheorem{theorem}{Theorem}
\newtheorem{lemma}[theorem]{Lemma}
\newtheorem{observation}[theorem]{Observation}
\newtheorem{corollary}[theorem]{Corollary}
\newtheorem{prop}[theorem]{Proposition}
\newtheorem{conjecture}[theorem]{Conjecture}
\newtheorem{claim}[theorem]{Claim}
\theoremstyle{definition}
\newtheorem{defn}[theorem]{Definition}
\newtheorem{remark}[theorem]{Remark}
\newtheorem{alg}[theorem]{Algorithm}
\def\qed{\ifhmode\unskip\nobreak\hfill$\Box$\bigskip\fi \ifmmode\eqno{Box}\fi}

\def\nul{\varnothing} 
\def\st{\colon\,}   
\def\VEC#1#2#3{#1_{#2},\ldots,#1_{#3}}
\def\VECOP#1#2#3#4{#1_{#2}#4\cdots #4 #1_{#3}}
\def\SE#1#2#3{\sum_{#1=#2}^{#3}} 
\def\PE#1#2#3{\prod_{#1=#2}^{#3}}
\def\UE#1#2#3{\bigcup_{#1=#2}^{#3}}
\def\CH#1#2{\binom{#1}{#2}} 
\def\FR#1#2{\frac{#1}{#2}}
\def\FL#1{\left\lfloor{#1}\right\rfloor} \def\FFR#1#2{\FL{\frac{#1}{#2}}}
\def\CL#1{\left\lceil{#1}\right\rceil}   \def\CFR#1#2{\CL{\frac{#1}{#2}}}
\def\Gb{\overline{G}}
\def\NN{{\mathbb N}} \def\ZZ{{\mathbb Z}} \def\QQ{{\mathbb Q}}
\def\RR{{\mathbb R}} \def\GG{{\mathbb G}} \def\FF{{\mathbb F}}

\def\B#1{{\bf #1}}      \def\R#1{{\rm #1}}
\def\I#1{{\it #1}}      \def\c#1{{\cal #1}}
\def\C#1{\left | #1 \right |}    
\def\P#1{\left ( #1 \right )}    
\def\ov#1{\overline{#1}}        \def\un#1{\underline{#1}}

\expandafter\ifx\csname dplus\endcsname\relax \csname newbox\endcsname\dplus\fi
\expandafter\ifx\csname dplustemp\endcsname\relax
\csname newdimen\endcsname\dplustemp\fi
\setbox\dplus=\vtop{\vskip -8pt\hbox{%
    \kern .2em
    \special{pn 6}%
    \special{pa 0 50}%
    \special{pa 50 100}%
    \special{fp}%
    \special{pa 50 100}%
    \special{pa 100 50}%
    \special{fp}%
    \special{pa 100 50}%
    \special{pa 50 0}%
    \special{fp}%
    \special{pa 50 100}%
    \special{pa 50 0}%
    \special{fp}%
    \special{pa 50 0}%
    \special{pa 0 50}%
    \special{fp}%
    \special{pa 0 50}%
    \special{pa 100 50}%
    \special{fp}%
    \hbox{\vrule depth0.080in width0pt height 0pt}%
    \kern .7em
  }%
}%
\def\gjoin{\copy\dplus}

\def\cD{{\mathcal D}}
\def\e{{\rm e}}
\def\la{\langle}
\def\ra{\rangle}
\def\symd{\kern-.1ex{\triangle}\kern.3ex}
\long\def\skipit#1{}


\title{$2$-Reconstructibility of Weakly Distance-Regular Graphs}

\author{
Douglas B. West\thanks{Zhejiang Normal Univ., Jinhua, China
and Univ.\ of Illinois at Urbana--Champaign, Urbana IL:
\texttt{dwest@illinois.edu}.  Supported by National Natural Science Foundation
of China grant NSFC 11871439, 11971439, and U20A2068.}\,,
Xuding Zhu\thanks{Zhejiang Normal Univ., Jinhua, China:
\texttt{xdzhu@zjnu.edu.cn}.  Supported by National Natural Science Foundation
of China grant NSFC 11971438 and U20A2068 and by Zhejiang Natural Science
Foundation grant ZJNSF LD19A010001.}
}

\maketitle

\baselineskip 16pt
\vspace{-2pc}

\begin{abstract}
A graph is $\ell$-reconstructible if it is determined by its multiset
of induced subgraphs obtained by deleting $\ell$ vertices.  We prove that
strongly regular graphs with at least six vertices are $2$-reconstructible.
\end{abstract}

The {\it $k$-deck} of an $n$-vertex graph is the multiset of its $\CH nk$
induced subgraphs with $k$ vertices.  The famous Reconstruction Conjecture of
Ulam~\cite{Kel1,Ulam} asserts that when $n\ge3$, every $n$-vertex graph is
determined by its $(n-1)$-deck.  One can consider more generally whether an
$n$-vertex graph is determined by its $(n-\ell)$-deck.  A graph or graph
property is {\it $\ell$-reconstructible} if it is determined by the deck
obtained by deleting $\ell$ vertices.  In light of the following observation,
we seek the maximum $\ell$ such that a graph is $\ell$-reconstructible.
The observation holds because each card in the $k'$-deck appears as an induced
subgraph in the same number of cards in the $k$-deck.

\begin{observation}\label{decks}
For $k'<k$, the $k$-deck of a graph determines the $k'$-deck.
\end{observation}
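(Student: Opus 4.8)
The plan is to recover, for every graph $H$ on $k'$ vertices, its multiplicity in the $k'$-deck directly from the $k$-deck, using a single double-counting identity. Since a multiset is determined by the multiplicity of each of its elements, it suffices to express, in terms of data visible in the $k$-deck, the number $m(H)$ of $k'$-element subsets $S\subseteq V(G)$ with $G[S]\cong H$; this $m(H)$ is exactly the multiplicity of $H$ in the $k'$-deck.

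First I would form the quantity computable purely from the $k$-deck: let $N=\sum_C \iota(H,C)$, where the sum runs over the cards $C$ of the $k$-deck and $\iota(H,C)$ is the number of $k'$-subsets of $V(C)$ that induce a copy of $H$. Each card is a known $k$-vertex graph, so $\iota(H,C)$ and hence $N$ are determined by the $k$-deck. The key step is then to evaluate $N$ a second way, by double counting the pairs $(S,T)$ with $S\subseteq T\subseteq V(G)$, $|S|=k'$, $|T|=k$, and $G[S]\cong H$. Grouping by the outer set $T$ reproduces $N$, because the cards of the $k$-deck are precisely the subgraphs $G[T]$ as $T$ ranges over all $k$-subsets of $V(G)$. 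Grouping by the inner set $S$ instead gives $m(H)\binom{n-k'}{k-k'}$, since each of the $m(H)$ qualifying subsets $S$ extends to a $k$-subset $T$ in exactly $\binom{n-k'}{k-k'}$ ways (choosing the remaining $k-k'$ vertices from the other $n-k'$). Equating the two counts yields $N=m(H)\binom{n-k'}{k-k'}$.

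To finish, I would recover $n$ from the $k$-deck: the number of cards equals $\binom nk$, and for $n\ge k$ this determines $n$, so the coefficient $\binom{n-k'}{k-k'}$ is a known integer, positive because $k'<k\le n$. Dividing gives $m(H)=N\big/\binom{n-k'}{k-k'}$, and ranging over all $k'$-vertex graphs $H$ reconstructs the full $k'$-deck. I do not anticipate a genuine obstacle here; the only care required is bookkeeping — keeping the labeled count of qualifying vertex subsets distinct from the isomorphism type being counted, and confirming that $n$, and therefore the binomial coefficient, is recoverable and nonzero so that the final division is legitimate.
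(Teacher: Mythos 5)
Your argument is correct and is exactly the paper's justification made explicit: the paper's one-line reason, that each card of the $k'$-deck appears as an induced subgraph in the same number of cards of the $k$-deck, is precisely your double count, with that ``same number'' being $\binom{n-k'}{k-k'}$. Your write-up just carries out the bookkeeping (recovering $n$ and dividing) that the paper leaves implicit.
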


In light of this observation, Manvel~\cite{M1,M2} posed a more general version
of the Reconstruction Conjecture, and he called this more general version
``Kelly's Conjecture''.

\begin{conjecture}[{\rm\cite{M1,M2}}]\label{manconj}
For each natural number $\ell$, there is a threshold $M_\ell$ such that every
graph with at least $M_\ell$ vertices is $\ell$-reconstructible.
\end{conjecture}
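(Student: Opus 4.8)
Since Conjecture~\ref{manconj} with $\ell=1$ is precisely the Reconstruction Conjecture of Ulam and Kelly, which is itself open, there is no realistic route to an unconditional proof for all graphs; the honest plan is to develop the counting machinery that reduces the problem and then to establish the conclusion for as wide a family of graphs as possible, of which the strongly regular graphs treated here are one instance. The organizing principle is that the $(n-\ell)$-deck already pins down a great deal of numerical data about an $n$-vertex graph $G$, and the aim is to extract enough of that data to force the isomorphism type once $n$ is large relative to $\ell$.

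The first step I would isolate is a Kelly-type counting lemma that generalizes the mechanism behind Observation~\ref{decks}. For a fixed graph $H$ on $h$ vertices with $h\le n-\ell$, every induced copy of $H$ in $G$ sits inside exactly $\binom{n-h}{\,n-\ell-h\,}$ cards of the $(n-\ell)$-deck. Hence, summing the number of induced copies of $H$ across all cards and dividing by this fixed multiplicity recovers the number of induced copies of $H$ in $G$ itself. Thus the $(n-\ell)$-deck determines the induced count of \emph{every} graph on at most $n-\ell$ vertices, and with it all invariants expressible through such counts: the number of edges, the degree sequence (via counting small stars), the numbers of triangles and short induced paths and cycles, connectedness, and the corresponding data for the complement $\ov G$.

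With these invariants in hand, the plan for any structured family is to show that within that family the reconstructible data determine the graph up to isomorphism, and then to iterate toward broader classes. For strongly regular graphs this works because the parameters $(n,k,\lambda,\mu)$ are read off from low-order subgraph counts and, together with the eigenvalue structure, constrain the graph tightly; the general program would reconstruct successively finer features—vertex-deleted profiles, neighborhood data, and automorphism-type information—until the type is forced. Throughout, one must stay in the regime where $n$ is large compared with $\ell$: when $n-\ell$ is a small constant the deck records little more than the edge count, and N\'ydl's constructions show that reconstruction can fail even when a positive fraction of the vertices is deleted, so the threshold $M_\ell$ genuinely grows with $\ell$ and any argument must exploit that the cards remain large.

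The hard part, and indeed the reason the conjecture is open already for $\ell=1$, is the final inference from numerical subgraph-count data to the isomorphism class of $G$: knowing the count of every induced subgraph up to a given size does not in general determine a graph, and no general mechanism is known to bridge this gap. The feasible contribution is therefore twofold—to push the counting lemma and its corollaries as far as they go, and to settle the conclusion family by family (as here for strongly regular graphs), each such result being a theorem of the form ``graphs in a class $\mathcal C$ with at least $M_\ell$ vertices are $\ell$-reconstructible'' that chips away at the full conjecture.
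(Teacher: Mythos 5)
The statement you were given is a conjecture, not a theorem: it is Manvel's strengthening of the Reconstruction Conjecture, which the paper states purely as motivation and does not prove. The paper's actual results (Theorems~\ref{srg} and~\ref{wdr}) settle only the special case $\ell=2$ for strongly regular and weakly distance-regular graphs on at least six vertices. Your refusal to fabricate a proof, and your reframing of the task as a program plus family-by-family results, is therefore the correct response and matches the paper's own stance exactly; there is no paper proof to compare against. Your Kelly-type counting lemma is also correctly stated: an induced copy of an $h$-vertex graph $H$ lies in exactly $\binom{n-h}{n-\ell-h}$ cards of the $(n-\ell)$-deck, so the deck determines the number of induced copies of every graph on at most $n-\ell$ vertices; this is the same double-counting that justifies Observation~\ref{decks}, and your closing observation---that such counts do not in general pin down the isomorphism type---is precisely why the conjecture is open even for $\ell=1$.

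Two cautions on the corollaries you draw from the counting lemma. First, the degree list is not an immediate consequence of star counts: counting stars yields only the binomial moments $\sum_v \binom{d(v)}{j}$ for $j\le n-\ell-1$, and these can fail to determine the degree multiset when $n$ is small relative to $\ell$ (the pair $C_4+K_1$ and $K_{1,3}'$ with $n=5$, $\ell=2$ have equal moments but different degree lists, which is why Chernyak's theorem~\cite{Che} carries the threshold $n\ge6$). Second, connectedness likewise requires a separate argument (Manvel~\cite{M2} for $\ell=2$, $n\ge6$), not just subgraph counts. The paper invokes exactly these two cited results rather than raw counting, and any write-up of your program should do the same, or else supply the threshold analysis that makes the moment argument valid for large $n$.
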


The original Reconstruction Construction is $M_1=3$.  Since the graph
$C_4+K_1$ and the tree $K_{1,3}'$ obtained by subdividing one edge of $K_{1,3}$
have the same $3$-deck, $M_2\ge6$.  Since $P_{2\ell}$ and
$C_{\ell+1}+P_{\ell-1}$ have the same $\ell$-deck (\cite{SW}), in general
$M_\ell\ge2\ell+1$, and a difficult result of N\'ydl~\cite{N1} implies that
$M_\ell$, if it exists, must grow superlinearly.  (We use $C_n,P_n,K_n$ for the
cycle, path, and complete graph with $n$ vertices, $K_{r,s}$ for the complete
bipartite graph with parts of sizes $r$ and $s$, and $G+H$ for the disjoint
union of graphs $G$ and $H$.  Our graphs have no loops or multi-edges.)

Kostochka and West~\cite{KW} surveyed results on $\ell$-reconstructibility of
graphs, so we do not provide an exhaustive review here.  One theme is to prove
that graphs in a particular family are $\ell$-reconstructible.  We consider
$2$-reconstructibility of a special family of regular graphs, where a graph is
{\it regular} if all vertices have the same degree.  One of the first results
about reconstruction is that regular graphs having at least three vertices are
$1$-reconstructible (Kelly~\cite{Kel2}).  By Observation~\ref{decks}, the
$(n-1)$-deck of a graph $G$ determines the $2$-deck and hence the number of
edges, so in each card of the $(n-1)$-deck we know the degree of the missing
vertex.  We then know we have a $k$-regular graph, and in any card the
neighbors of the missing vertex are those with degree $k-1$ in the card.

Bojan Mohar asked whether regular graphs are $2$-reconstructible.  
Although Chernyak~\cite{Che} proved that the degree list is $2$-reconstructible
for graphs with at least six vertices ($C_4+K_1$ and $K_{1,3}'$ show that
this is sharp), knowing that the graph is $k$-regular does not generally
provide enough information to decide which of the deficient vertices in a card
adjacent to which of the two vertices missing from the card (those of degree
$k-2$ in the card are adjacent to both missing vertices).  Nevertheless,
Kostochka, Nahvi, West, and Zirlin~\cite{KNWZ1} proved that $3$-regular graphs
are $2$-reconstructible.

In this note, we consider regular graphs of higher degree but restrict the
structure of common neighbors.  A graph is {\it strongly regular} with
parameters $(k,\lambda,\mu)$ if it is $k$-regular, every two adjacent vertices
have exactly $\lambda$ common neighbors, and every two nonadjacent vertices
have exactly $\mu$ common neighbors.  Discussion of strongly regular graphs and
their properties can be found for example in the book by van Lint and
Wilson~\cite{vLW}.

Most of our argument applies to graphs in a more general family.
A graph is {\it distance-regular} if for any two vertices $u$ and
$v$, the number of vertices at distance $i$ from $u$ and distance $j$ from $v$
depends only on $i$, $j$, and the distance between $u$ and $v$.  For graphs
with diameter $d$, an equivalent condition is the existence of parameters
$(\VEC b0d;\VEC c0d)$ (called the {\it intersection array} of $G$) such that
for all $u,v\in V(G)$ separated by distance $j$, the numbers of neighbors of
$u$ having distance $j+1$ or $j-1$ from $v$ are $b_j$ and $c_j$, respectively
(Brouwer et al.\cite{BCN}).  A strongly regular graph with parameters
$(k,\lambda,\mu)$ is distance-regular with intersection array
$(k,k-\lambda-1,0;0,1,\mu)$.  In fact, a non-complete distance-regular graph is
strongly regular if and only if it has diameter $2$ (Biggs~\cite{Big}).

A disjoint union of complete graphs with at least six vertices is 
$2$-reconstructible, because we know the degree list and we know that
no three vertices induce $P_3$.  Also, connectedness of an $n$-vertex
graph is determined by the $(n-2)$-deck when $n\ge6$ (Manvel~\cite{M2}).
Hence in our discussion we may assume that we are given the deck of an
$n$-vertex connected graph.  We will prove $2$-reconstructibility for all
strongly regular graphs and all graphs in a family that includes all
distance-regular graphs where vertices at distance $2$ have at least two common
neighbors.  The latter family includes all distance-regular graphs.

We define a regular graph to be {\it weakly distance-regular} if 
any two adjacent vertices have $\lambda$ common neighbors and any
two vertices separated by distance $2$ have $\mu'$ common neighbors.
In particular, we are requiring the existence of only one of the parameters
of distance-regular graphs for nonadjacent vertices.
We prove that strongly regular graphs (even with $\mu=1$) and
weakly distance-regular graphs with $\mu'\ge2$ are $2$-reconstructible.

For strongly regular graphs,
the method is analogous to the proof of $1$-reconstructibility of regular
graphs.  We use all the cards in the $(n-2)$-deck to recognize that any
graph having this deck is strongly regular and to determine the parameters
$(k,\lambda,\mu)$.  We then use a single card to reconstruct the graph.

\begin{theorem}\label{srg}
Strongly regular graphs with at least six vertices are $2$-reconstructible.
\end{theorem}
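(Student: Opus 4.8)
The plan is to follow the two-phase strategy announced above: first use the entire $(n-2)$-deck to show that \emph{every} graph with this deck is strongly regular with the same parameters $(k,\lambda,\mu)$ as $G$, and then use a single card to rebuild the graph explicitly. Throughout, let $H$ be an arbitrary graph with the same $(n-2)$-deck as $G$; the goal is to show $H\cong G$. The number $n$ is read off from the deck, and by Chernyak's theorem the degree list is $2$-reconstructible for $n\ge6$, so $H$ is $k$-regular with the same $k$ as $G$. Since complementing every card of $G$ produces the $(n-2)$-deck of $\ov G$, and $\ov G$ is again strongly regular, $G$ is $2$-reconstructible if and only if $\ov G$ is; hence I may assume $k\le(n-1)/2$, so that $k\le n-4$ for all $n\ge6$. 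This bound keeps all common-neighbor counts small enough for the counting argument below.

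For the first phase, the key point is that the $(n-2)$-deck determines the number of (not necessarily induced) copies of every graph on at most $n-2$ vertices, obtained from the induced-subgraph counts by summing over edge-supersets. In particular the deck determines, for each admissible $i$, the number of ways to choose a pair $\{u,v\}$ together with $i$ common neighbors of $u$ and $v$, namely $\sum_{\{u,v\}}\binom{c(u,v)}{i}$, where $c(u,v)$ denotes the number of common neighbors; this is a subgraph count on $i+2$ vertices. Requiring in addition that $uv$ be an edge (counting the $i$ common neighbors as pages of a ``book'' on a spine edge) splits the sum by adjacency, so the deck determines both $\sum_{u\sim v}\binom{c(u,v)}{i}$ and $\sum_{u\not\sim v}\binom{c(u,v)}{i}$ for $0\le i\le n-4$. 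Because every common-neighbor count is at most $k\le n-4$, these binomial moments for all relevant $i$ determine the full \emph{distribution} of common-neighbor counts over edges and, separately, over non-edges. For $G$ these distributions are point masses at $\lambda$ and at $\mu$; since $H$ has the same deck it has the same distributions, which forces every edge of $H$ to have exactly $\lambda$ common neighbors and every non-edge exactly $\mu$. Thus $H$ is strongly regular with parameters $(k,\lambda,\mu)$, and these parameters are themselves read off from the deck.

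For the second phase, fix one card $C=G-\{x,y\}$. In $C$ a vertex has degree $k$, $k-1$, or $k-2$ according to whether it is adjacent to neither, exactly one, or both of $x,y$; write $S$ for the set of degree-$(k-2)$ vertices and $D=N(x)\symd N(y)$ for the set of degree-$(k-1)$ vertices. Then $S=N(x)\cap N(y)$ is known, and since $x$ has $k$ neighbors in $G$ and $|A|=|B|$ for $A=N(x)\setminus N(y)$, $B=N(y)\setminus N(x)$ (by regularity), the indicator $[\,x\sim y\,]=k-|S|-|D|/2$ tells us whether $xy$ is an edge. It remains to split $D$ into $A$ and $B$. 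For $w,w'\in D$, let $c_0(w,w')$ be their number of common neighbors inside $C$; the total count $c(w,w')$ equals $\lambda$ or $\mu$ according to adjacency in $C$, hence is known. A direct check shows $c(w,w')-c_0(w,w')$ equals $1$ when $w,w'$ lie on the same side and $0$ when they lie on opposite sides, because $x$ (respectively $y$) is a common neighbor of $w,w'$ exactly when both lie in $A$ (respectively $B$). This exact pairwise same-side test recovers the partition $\{A,B\}$ up to the harmless interchange of $x$ and $y$, and hence recovers all adjacencies of $x$ and $y$. Reinserting $x,y$ into $C$ reconstructs $G$; applying the identical procedure, with the same parameters, to the same card viewed as a card of $H$ reconstructs $H$, so $G\cong H$.

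The main obstacle is the first phase: converting ``same deck'' into ``strongly regular with the same parameters.'' The single-card reconstruction is essentially forced once the parameters are known, whereas recognizing strong regularity requires that the moment sums $\sum\binom{c}{i}$ reach high enough $i$ to pin down the two distributions, which is precisely where the reduction to $k\le(n-1)/2$ by complementation is essential. I expect the delicate bookkeeping to lie in verifying that the split-by-adjacency configuration counts are genuinely determined by the $(n-2)$-deck throughout the required range $0\le i\le n-4$, and in confirming that this range always suffices given $k\le n-4$.
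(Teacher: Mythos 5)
Your reconstruction phase (splitting the degree-$(k-1)$ vertices of a card into $N(x)$ and $N(y)$ via the difference $c(w,w')-c_0(w,w')$, which is $1$ for same-side pairs and $0$ for opposite-side pairs) is exactly the paper's argument and is correct; the paper merely restricts to a card whose two deleted vertices are nonadjacent (available unless $G=K_n$), whereas you handle both cases, which also works. Where you genuinely diverge is the recognition phase. The paper observes that each card individually reveals everything needed: a card of a $k$-regular graph is missing $2k-1$ or $2k$ edges according to whether the two deleted vertices are adjacent, and the number of vertices of degree $k-2$ in the card is exactly their number of common neighbors; hence $G$ is strongly regular with parameters $(k,\lambda,\mu)$ if and only if every card missing $2k-1$ edges has exactly $\lambda$ such vertices and every card missing $2k$ edges has exactly $\mu$. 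Your route instead goes through Kelly-type counts of books and complete bipartite subgraphs, the binomial moments $\sum\binom{c(u,v)}{i}$ split by adjacency, and a complementation reduction to force $k\le n-4$ so that the moments determine the two common-neighbor distributions. This is correct---the subgraph counts on at most $n-2$ vertices are determined by the $(n-2)$-deck by the standard Kelly argument, and the unitriangular binomial-moment system is invertible on the support $\{0,\dots,k\}$---but it is considerably heavier than necessary, and the ``delicate bookkeeping'' you anticipate simply disappears in the paper's per-card version: the deck hands you the adjacency and the common-neighbor count of each deleted pair directly, with no global counting and no complementation. The per-card recognition also extends more cleanly to the weakly distance-regular setting of the paper's second theorem, where the distribution of common-neighbor counts over nonadjacent pairs is no longer a point mass.
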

\begin{proof}
Let $G$ be an $n$-vertex graph, where $n\ge6$, and let $\cD$ be the
$(n-2)$-deck of $G$.  By the result of Chernyak~\cite{Che}, $\cD$ determines
the degree list of $G$ and hence whether $G$ is $k$-regular.  If so, then any
card $C$ in $\cD$ is missing $2k-1$ or $2k$ of the $kn/2$ edges in $G$,
depending on whether the two omitted vertices are adjacent or not.  Hence we
also see whether the vertices omitted by $C$ are adjacent.  Their number of
common neighbors is the number of vertices with degree $k-2$ in $C$.  The
graph $G$ is strongly regular with parameters $(k,\lambda,\mu)$ if and only if
that number is $\lambda$ in each card missing $2k-1$ edges and $\mu$ in each
card missing $2k$ edges.

Having recognized that $G$ is strongly regular with parameters
$(k,\lambda,\mu)$, consider one card $C$, and let $u$ and $v$ be the two
omitted vertices.  We know whether $u$ and $v$ are adjacent.  If $G$ is not
$K_n$, which we can determine, then we may choose $C$ so that $u$ and $v$
are not adjacent.  We know the $\mu$ common neighbors of $u$ and $v$, and
we know the set $S$ of $2k-2\mu$ vertices that are adjacent to exactly one of
$\{u,v\}$.

For $x,y\in S$, each of $x$ and $y$ has one neighbor in $\{u,v\}$; the
neighbors may be the same or distinct.  The vertices $x$ and $y$ have $\lambda$
or $\mu$ common neighbors in $G$, depending on whether they are adjacent.  We
see in $C$ whether they are adjacent, so we know their number of common
neighbors in $G$; call it $\rho$.  If $x$ and $y$ have $\rho$ common neighbors
in $C$, then they have different neighbors in $\{u,v\}$; if they have $\rho-1$
common neighbors in $C$, then they have the same neighbor in $\{u,v\}$.

This labels each pair of vertices in $S$ as ``same'' or ``different''.
Also, the relation defined by ``same'' is an equivalence relation.  Hence
it partitions $S$ into two sets.  We assign one of those sets to the
neighborhood of $u$ and the other to the neighborhood of $v$.  It does not
matter which set we assign to which neighborhood, because in both cases we
obtain the same graph, and it is $G$.
\end{proof}

The proof of Theorem~\ref{srg} applies to all connected strongly regular
graphs.  In particular, we allow the possibility $\mu=1$.  For the more general
class of weakly distance-regular graphs, we need to work harder, and the
proof does not apply to the case $\mu'=1$.

\begin{theorem}\label{wdr}
Weakly distance-regular graphs with at least six vertices and parameters
$(k,\lambda,\mu')$ with $\mu'\ge2$ are $2$-reconstructible.
\end{theorem}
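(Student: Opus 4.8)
The plan is to follow the template of Theorem~\ref{srg}: use the whole deck $\cD$ to certify that every graph with this deck is weakly distance-regular with parameters $(k,\lambda,\mu')$, and then reconstruct from a single card. Chernyak's theorem~\cite{Che} recovers the degree list from $\cD$, so we detect whether $G$ is $k$-regular. In a $k$-regular graph a card omitting $u,v$ is missing $2k-1$ edges if $u$ and $v$ are adjacent and $2k$ edges otherwise, and the vertices of degree $k-2$ in that card are exactly the common neighbors of $u$ and $v$; thus each card reveals both the adjacency and the number of common neighbors of its omitted pair. Because a nonadjacent pair has a common neighbor precisely when it lies at distance $2$, a graph with deck $\cD$ is weakly distance-regular with parameters $(k,\lambda,\mu')$ exactly when every card missing $2k-1$ edges has $\lambda$ vertices of degree $k-2$ and every card missing $2k$ edges has either $0$ or $\mu'$ such vertices, with $\mu'\ge2$ the common nonzero value. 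This is a property of $\cD$, so recognition is immediate. By the reduction in the introduction we may assume $G$ is connected, and since $\mu'$ is a genuine parameter some two vertices lie at distance $2$, so $G$ is not complete.

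For the reconstruction we would choose a card $C$ whose omitted pair $u,v$ is nonadjacent and at distance $2$, recognizable as a card missing $2k$ edges with exactly $\mu'$ vertices of degree $k-2$. In $C$ those $\mu'$ vertices are the common neighbors of $u$ and $v$, the $2k-2\mu'$ vertices of degree $k-1$ form the set $S$ of vertices adjacent to exactly one of $u,v$, and the vertices of degree $k$ are adjacent to neither. Write $S=S_u\sqcup S_v$ for the unknown partition of $S$ according to which of $u,v$ a vertex is adjacent to. Recovering this partition suffices: we then attach $u$ to $S_u$ together with the common neighbors, attach $v$ to $S_v$ together with the common neighbors, and leave $u,v$ nonadjacent; as in Theorem~\ref{srg}, the two ways of matching the parts to $u$ and $v$ give the same graph, namely $G$.

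To recover the partition we classify each pair $x,y\in S$ as \emph{same} or \emph{different}. Each of $x,y$ has exactly one neighbor in $\{u,v\}$; if $x,y$ are on the same side, that shared neighbor is a common neighbor of $x$ and $y$ in $G$, whereas if they are on different sides neither $u$ nor $v$ is a common neighbor of $x$ and $y$. When $x,y$ are adjacent they have $\lambda$ common neighbors in $G$, so the number of their common neighbors seen in $C$ is $\lambda-1$ for a same-side pair and $\lambda$ for a different-side pair. When $x,y$ are nonadjacent, a same-side pair lies at distance $2$ (the shared neighbor witnesses this) and has $\mu'$ common neighbors, one of which is lost, giving $\mu'-1$ common neighbors in $C$; a different-side pair has $\mu'$ common neighbors (if at distance $2$) or $0$ (if at distance at least $3$), none equal to $u$ or $v$, giving $\mu'$ or $0$.

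The hypothesis $\mu'\ge2$ is exactly what makes this classification decidable in the nonadjacent case, which is the main obstacle. For such pairs the values $\mu'-1,\ \mu',\ 0$ must be distinguished, and they are pairwise distinct precisely when $\mu'\ge2$; then the count $\mu'-1$ certifies \emph{same} and the counts $\mu'$ and $0$ certify \emph{different}. When $\mu'=1$ the same-side count $\mu'-1=0$ collides with the different-side distance-at-least-$3$ count $0$, and the method fails; this is also why the diameter-$2$ setting of Theorem~\ref{srg} causes no trouble, since there the value $0$ never occurs. Granting $\mu'\ge2$, the relation \emph{same} coincides with lying on a common side of $S=S_u\sqcup S_v$, hence is an equivalence relation splitting $S$ into the two parts, and the attachment described above reconstructs $G$.
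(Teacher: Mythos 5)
Your proof is correct and follows essentially the same route as the paper: recognize the parameters from the deck via Chernyak's theorem and the degree-$(k-2)$ counts, then classify each pair in $S$ as ``same'' or ``different'' by comparing its common-neighbor count in the card against $\lambda$, $\mu'$, or $0$, with $\mu'\ge2$ being exactly what keeps the values $\mu'-1$, $\mu'$, and $0$ distinguishable. The only cosmetic difference is that you reconstruct from a card whose omitted pair is nonadjacent at distance $2$, whereas the paper runs the same argument on an arbitrary card.
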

\begin{proof}
As in the proof of Theorem~\ref{srg}, we know the degree list and thus can
recognize both that $G$ is $k$-regular and whether the missing vertices in
any card are adjacent in $G$.  The number of common neighbors of the two
missing vertices in a card is the number of vertices having degree $k-2$ in
the card.  To recognize that $G$ is in the specified class, we check that 
these numbers all equal $\lambda$ when the missing vertices are adjacent
and equal $\mu'$ when the missing vertices are nonadjacent and the number is
positive.  The number is $0$ when the distance between the missing vertices
in $G$ exceeds $2$.  Hence we can recognize that $G$ is weakly distance-regular
with parameters $(k,\lambda,\mu')$ (including when $\mu'=1$).

Given a card $C$, again let $S$ be the set of vertices adjacent to exactly one
of the two vertices $u$ and $v$ missing from $C$; these are the vertices
having degree $k-1$ in $C$.  Let $x$ and $y$ be two vertices in $S$.
We see in $C$ whether $x$ and $y$ are adjacent.  If so, then they have
$\lambda$ common neighbors in $G$.  Their number of common neighbors in $C$
is then $\lambda$ or $\lambda-1$, which tells us whether they have the same
neighbor in $\{u,v\}$.

Since $\mu'\ge2$, when $x$ and $y$ are nonadjacent in $G$ we see a common
neighbor of $x$ and $y$ in $C$ if and only if the distance between $x$ and $y$
in $G$ is $2$.  Hence for the pairs of vertices in $S$ separated by distance
$2$, we can again tell whether their neighbors in $\{u,v\}$ are the same or
different.  The pairs of vertices in $S$ that are separated by distance more
than $2$ in $G$ are those having no common neighbor in $C$.  They must have
distinct neighbors in $\{u,v\}$, and the distance between them is $3$.

With these arguments, we know for all pairs of vertices in $S$ whether their
neighbors in $\{u,v\}$ are the same or different.  Hence again we have two
equivalence classes and assign one class to the neighborhood of each of these
vertices to complete the reconstruction of $G$.
\end{proof}

\begin{center}
{\Large\bf Acknowledgment}
\end{center}

We thank Alexandr V. Kostochka for helpful discussions.


\begin{thebibliography}{99}


\frenchspacing
\renewcommand{\baselinestretch}{1.1}

{\small





\bibitem{Che}
Zh. A. Chernyak,
Some additions to an article by B. Manvel: "Some basic observations on Kelly's
conjecture for graphs'' (Russian),
{\it Vests$\overline{\i}$ Akad. Navuk BSSR Ser. Fīz.-Mat. Navuk}
(1982), 44--49, 126.




\bibitem{Big}
N.~L.~Biggs,
{\it Algebraic Graph Theory, 2nd ed.},
(Cambridge University Press, 1993), 159.

\bibitem{BCN}
A.~E.~Brouwer, A.~M.~Cohen, and A.~Neumaier,
{\it Distance-Regular Graphs}, (Springer-Verlag, 1989), 434.

\bibitem{Kel1}
P.~J.~Kelly,
On isometric transformations,
PhD Thesis, University of Wisconsin--Madison, 1942.

\bibitem{Kel2}
P.~J.~Kelly,
A congruence theorem for trees,
{\it Pacific J. Math.} 7 (1957), 961--968.

\bibitem{KNWZ1}
A.~V.~Kostochka, M. Nahvi, D.~B.~West, and D.~Zirlin,
$3$-regular graphs are $2$-reconstructible,
{\it European J. Combinatorics} 91 (2021) 103216 (no page numbers), 10pp.


\bibitem{KW}
A.~V.~Kostochka and D.~B.~West, On reconstruction of graphs from the multiset
of subgraphs obtained by deleting $\ell$ vertices,
{\it IEEE Transactions on Information Theory} 67 (2021), 3278--3286.





\bibitem{M1} 
B. Manvel, On reconstruction of graphs, in 
{\it The Many Facets of Graph Theory}
(Proc. Conf.  Western Mich. Univ., Kalamazoo, Mich., 1968),
(Springer, 1969), 207--214.

\bibitem{M2}
B. Manvel, Some basic observations on Kelly's conjecture for graphs,
{\it Discrete Math.} 8 (1974), 181--185. 







\bibitem{N1}
V. N\'ydl,
Finite undirected graphs which are not reconstructible from their
large cardinality subgraphs,
{\it Discrete Math.} 108 (1992), 373--377.


\bibitem{SW}
H. Spinoza and D.~B.~West,
Reconstruction from the deck of $k$-vertex induced subgraphs,
{\it J. Graph Theory} 90 (2019), 497--522.



\bibitem{Ulam}
S.~M.~Ulam, A collection of mathematical problems,
{\it Interscience Tracts in Pure and Applied Mathematics} 8
(Interscience Publishers, 1960).

\bibitem{vLW}
J.~H.~van Lint and R.~M.~Wilson,
{\it A course in combinatorics.}  Second edition.
(Cambridge University Press, Cambridge, 2001).


}
\end{thebibliography}
\end{document}